\def\E23{{E_{\{2,3\}}}}
\theoremstyle{plain}%
  \newtheorem{theorem}{Theorem}
  \newtheorem{corollary}{Corollary}
  \newtheorem{proposition}{Proposition}
  \newtheorem{lemma}{Lemma}%
\theoremstyle{remark}
\theoremstyle{definition}
\newtheorem*{thmA}{Main Theorem}
\begin{document}
\renewcommand{\theequation}{\arabic{equation}}
\title{Multinomial Sum Formulas of Multiple Zeta Values}
\author{Kwang-Wu Chen\footnote{
Department of Mathematics, University of Taipei,
No. $1$,  Ai-Guo West Road, Taipei $10048$, Taiwan.
E-mail: kwchen@uTaipei.edu.tw}
}
\maketitle

\begin{abstract}  
For a pair of positive integers $n,k$ with $n\geq 2$, in this paper
we prove that
$$
\sum_{r=1}^k\sum_{|\bm\alpha|=k}{k\choose\bm\alpha}
\zeta(n\bm\alpha)=\zeta(n)^k
=\sum^k_{r=1}\sum_{|\bm\alpha|=k}
{k\choose\bm\alpha}(-1)^{k-r}\zeta^\star(n\bm\alpha),
$$
where $\bm\alpha=(\alpha_1,\alpha_2,\ldots,\alpha_r)$ 
is a $r$-tuple of positive integers.
Moreover, we give an application to combinatorics and get the following identity:
$$
\sum^{2k}_{r=1}r!{2k\brace r}=\sum^k_{p=1}\sum^k_{q=1}{k\brace p}{k\brace q}
p!q!D(p,q),
$$
where ${k\brace p}$ is the Stirling numbers of the second kind and $D(p,q)$ is the 
Delannoy number.
\end{abstract}

\noindent{\small {\it Key Words:} 
Multiple Zeta Values, Multiple Zeta-Star Values, Sum Formulas, 
Delannoy numbers, Fubini numbers.

\noindent{\it Mathematics Subject Classification 2010:}
Primary 11M32; Secondary 05A19.}

\setlength{\baselineskip}{18pt}
\section{Introduction}\label{sec.1}
The multiple zeta values (MZVs) are defined by 
\cite{BBBL2, Eie, Eie2, Gen}
$$
\zeta(\alpha_1,\alpha_2,\ldots,\alpha_r) = \sum_{1\leq k_1<k_2<\cdots<k_r}
k_1^{-\alpha_1}k_2^{-\alpha_2}\cdots k_r^{-\alpha_r}
$$
with positive integers $\alpha_1,\alpha_2,\ldots,\alpha_r$ 
and $\alpha_r\geq 2$ for the sake of convergence. 
The numbers $r$ and $|\bm\alpha|=\alpha_1+\alpha_2+\cdots+\alpha_r$
are the depth and weight of $\zeta(\bm\alpha)$. For our convenience,
we let $\{a\}^k$ be $k$ repetitions of $a$, for example, 
$\zeta(\{2\}^3)=\zeta(2,2,2)$.

MZVs of length one and two were already known to Euler. 
A systematic study of MZVs began in the early
1990s with the works of Hoffman \cite{Hof} and Zagier \cite{Zag}. 
Since then these numbers have emerged in several mathematical 
areas including algebraic geometry, Lie
group theory, advanced algebra, and combinatorics. 
Interestingly enough, MZVs also appear in theoretical physics, 
in particular in the context of perturbative quantum field
theory. This leads to a fruitful interplay between mathematics 
and theoretical physics. See \cite{Eie, Eie2, Zha} for introductory reviews.

A principal goal in the theoretical study of MZVs is to determine 
all possible algebraic relations among them. The most well-known identity
is the sum formula
$$
\sum_{|\bm\alpha|=n\atop \alpha_i\geq 1,\alpha_r\geq 2}
\zeta(\alpha_1,\ldots,\alpha_r)=\zeta(n).
$$
This formula was original obtained by Euler when $r=2$
and its general form was conjectured in \cite{Hof1}
and proved by Granville\cite{Gra} and Zagier\cite{Zag1}. 
Since then the sum formula
has been generalized and extended in various directions
\cite{CCE,ELO,Gen,GX,Ohn,OW,OZ}.

The multiple zeta-star values (MZSVs) are defined by 
\cite{IKOO,Mun,Zha}
$$
\zeta^\star(\alpha_1,\alpha_2,\ldots,\alpha_r) 
= \sum_{1\leq k_1\leq k_2\leq\cdots\leq k_r}
k_1^{-\alpha_1}k_2^{-\alpha_2}\cdots k_r^{-\alpha_r}
$$
with positive integers $\alpha_1,\alpha_2,\ldots,\alpha_r$ 
and $\alpha_r\geq 2$ for the sake of convergence. 

In this paper we prove another sum formula.
\begin{thmA}
For positive integers $n,k$ with $n\geq 2$, we have
$$
\sum_{r=1}^k\sum_{|\bm\alpha|=k}{k\choose\bm\alpha}
\zeta(n\bm\alpha)=\zeta(n)^k
=\sum^k_{r=1}\sum_{|\bm\alpha|=k}
{k\choose\bm\alpha}(-1)^{k-r}\zeta^\star(n\bm\alpha),
$$
where $\bm\alpha=(\alpha_1,\alpha_2,\ldots,\alpha_r)$ 
is a $r$-tuple of positive integers.
\end{thmA}

In additional, the stuffle product of two MVZs of depth $p$ and $q$ produces
$D(p,q)$ numbers of MZVs \cite{Chen1}, where $D(m,n)$
is the Delannoy number. The Delannoy number $D(m,n)$ is defined 
for nonnegative integers $m$ and $n$ by \cite[Page 81]{Com} 
$$
D(m,n)=\left\{\begin{array}{ll}1,&\mbox{if }m\cdot n=0,\\
D(m-1,n)+D(m-1,n-1)+D(m,n-1), &\mbox{if }m\cdot n\neq 0.\end{array}\right.
$$
The number $\sum^k_{r=1}r!{k\brace r}$ is usually called 
the Fubini numbers \cite{Com} or
the ordered Bell numbers \cite[Section 5.2]{Wil}
which count the number of weak orderings on a set of $k$ elements. 
Applying Main Theorem to combinatorics, we obtain an interesting identitiy
which connects Fubini numbers and Delannoy numbers:
$$
\sum^{2k}_{r=1}r!{2k\brace r}=\sum^k_{p=1}\sum^k_{q=1}{k\brace p}{k\brace q}
p!q!D(p,q),
$$
where $k$ is a positive integer and ${k\brace p}$ is the Stirling numbers of the second kind.

Our paper is organized as follows. In Section 2, we present
some preliminaries. In Section 3, we prove the main theorem. 
We give the combinatorial application in Section 4.
In the final section, we give a further application to the mutliple Hurwitz zeta functions
and the multiple Hurwitz zeta star functions.
%
%
\section{Some Preliminaries}\label{sec.2}
We summarize the algebraic setup of MZVs introduced by 
Hoffman \cite{Hof} and MZSVs introduced by Muneta \cite{Mun}
as follows.
Let us consider the coding of multi-indices
$\vec{s}=(s_1,\ldots,s_k)$, $s_i$ are positive integers and $s_k>1$, by words (that is,
by monomials in non-commutative variables) over the alphabet $X=\{x,y\}$ by
the rule
$$
  \vec{s}\mapsto x_{\vec{s}}=x^{s_1-1}yx^{s_2-1}y\cdots x^{s_k-1}y.
$$
We set
$$
  \zeta(x_{\vec{s}}):=\zeta(\vec{s})
$$
for all admissible words (that is, beginning with $x$ and ending with $y$); then
the weight (or the degree) $|x_{\vec{s}}|:=|\vec{s}|$ coincides with the total degree of the monomial 
$x_{\vec{s}}$, whereas the length (or the depth) $l(x_{\vec{s}}):=l(\vec{s})$ is the degree with
respect to the variable $y$.

Let ${\mathbb Q}\langle X\rangle={\mathbb Q}\langle x,y\rangle $ 
be the $\mathbb Q$-algebra of polynomials in 
two non-commutative variables which is graded by the degree (where each of the variables
$x$ and $y$ is assumed to be of degree $1$); we identify the algebra 
${\mathbb Q}\langle X\rangle $ 
with the graded $\mathbb Q$-vector space $\mathfrak H$ spanned by the monomials in the variables
$x$ and $y$ \cite{Hof}. 

We also introduce the graded $\mathbb Q$-vector spaces
${\mathfrak H}^1={\mathbb Q\bm 1}
\bigoplus {\mathfrak H}y$ and 
${\mathfrak H}^0={\mathbb Q\bm 1}
\bigoplus x{\mathfrak H}y$, where $\bm 1$ denotes the unit
(the empty word of weight $0$ and length $0$) of the algebra ${\mathbb Q}\langle X\rangle $.
Then the space ${\mathfrak H}^1$ can be regarded as the subalgebra of ${\mathbb Q}\langle X\rangle $ 
generated by the words $z_s=x^{s-1}y$, whereas ${\mathfrak H}^0$ is the $\mathbb Q$-vector
space spanned by all admissible words. 

Let us define two bilinear products $*$ (the 
{\it harmonic product}) and $\star$ on ${\mathfrak H}^1$ by the rules
$$
  \mbox{\boldmath 1}* w=w*\mbox{\boldmath 1}=w,\quad 
  \mbox{\boldmath 1}\star w=w\star\mbox{\boldmath 1}=w
$$
for any word $w$, and
\begin{eqnarray*}
  z_ju * z_k v &=& z_j(u*z_kv)+z_k(z_ju*v)+z_{j+k}(u*v),\\
  z_ju \star z_k v &=& z_j(u\star z_kv)+z_k(z_ju\star v)-z_{j+k}(u\star v)
\end{eqnarray*}
for any words $u$, $v$, any letters $x_i=x$ or $y$ ($i=1, 2$), 
and any generators $z_j$, $z_k$ of 
the subalgebra ${\mathfrak H}^1$, and then extend the above rules to the whole algebra ${\mathfrak H}$
and the whole subalgebra ${\mathfrak H}^1$ by linearity. 
It is known that each of the above products 
is commutative and associative. We denote the algebras
$({\mathfrak H}^1,+,*)$ and $({\mathfrak H}^1,+,\star)$ by
${\mathfrak H}^1_*$ and ${\mathfrak H}^1_\star$, respectively.
We define rational linear maps 
$\zeta:{\mathfrak H}^0_*\rightarrow\mathbb R$ and
$\zeta^\star:{\mathfrak H}^0_\star\rightarrow\mathbb R$
by $\zeta(\bm 1)=\zeta^\star(\bm 1)=1$ and
\begin{eqnarray*}
\zeta(z_{s_1}z_{s_2}\cdots z_{s_k}) &=&
\zeta(s_1,s_2,\ldots,s_k),\\
\zeta^\star(z_{s_1}z_{s_2}\cdots z_{s_k}) &=&
\zeta^\star(s_1,s_2,\ldots,s_k).
\end{eqnarray*}
Then these maps are algebra homomorphisms \cite{IKOO}:
$$
\zeta(w_1*w_2)=\zeta(w_1)\zeta(w_2),\qquad
\zeta^\star(w_1\star w_2)=\zeta^\star(w_1)\zeta^\star(w_2).
$$
%
%
\section{Main Theorem}\label{sec.3}
For our convenience, we let 
$$
\underbrace{z_n\cdots z_n}_k=z^k_n,\quad
\underbrace{z_n*z_n*\cdots*z_n}_k=z_{*n}^{k},\quad\mbox{and}\quad
\underbrace{z_n\star z_n\star\cdots\star z_n}_k=z_{\star n}^{k}.
$$
\begin{lemma} 
Let $n,r$ and $\alpha_1,\ldots,\alpha_r$ be positive integers, we have
\begin{eqnarray}\label{eq.07} 
\lefteqn{z_{n\alpha_1}\cdots z_{n\alpha_r}*z_n}\\
&=&z_nz_{n\alpha_1}\cdots z_{n\alpha_r}+\nonumber
z_{n\alpha_1}z_nz_{n\alpha_2}\cdots z_{n\alpha_r}+\cdots
+z_{n\alpha_1}\cdots z_{n\alpha_r}z_n \\ \nonumber
&&\qquad +z_{n(\alpha_1+1)}z_{n\alpha_2}\cdots z_{n\alpha_r}+
z_{n\alpha_1}z_{n(\alpha_2+1)}z_{n\alpha_3}\cdots z_{n\alpha_r}+
\cdots+z_{n\alpha_1}\cdots z_{n\alpha_{r-1}} z_{n(\alpha_r+1)},\\
\label{eq.08} 
\lefteqn{z_{n\alpha_1}\cdots z_{n\alpha_r}\star z_n}\\ \nonumber
&=&z_nz_{n\alpha_1}\cdots z_{n\alpha_r}+
z_{n\alpha_1}z_nz_{n\alpha_2}\cdots z_{n\alpha_r}+\cdots
+z_{n\alpha_1}\cdots z_{n\alpha_r}z_n \\ \nonumber
&&\qquad -\left(z_{n(\alpha_1+1)}z_{n\alpha_2}\cdots z_{n\alpha_r}+
z_{n\alpha_1}z_{n(\alpha_2+1)}z_{n\alpha_3}\cdots z_{n\alpha_r}+
\cdots+z_{n\alpha_1}\cdots z_{n\alpha_{r-1}} z_{n(\alpha_r+1)}\right).
\end{eqnarray}
\end{lemma}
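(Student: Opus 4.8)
The plan is to prove both identities simultaneously by induction on the depth $r$ of the word $w=z_{n\alpha_1}\cdots z_{n\alpha_r}$, applying the recursive definitions of $*$ and $\star$ directly. The guiding observation is that a single generator $z_n$ appended on the right can interact with $w$ in only two ways: it can slot into one of the gaps between letters of $w$ (an \emph{insertion}), or it can be absorbed into one of the existing letters, turning $z_{n\alpha_i}$ into $z_{n\alpha_i+n}=z_{n(\alpha_i+1)}$ (a \emph{contraction}). The recursion is built precisely to separate these two contributions, so the whole argument is really a careful unwinding of it.

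First I would dispose of the base case $r=1$. Taking $u=v=\bm 1$ in the defining rule for $*$ gives
$$z_{n\alpha_1}*z_n=z_{n\alpha_1}(\bm 1*z_n)+z_n(z_{n\alpha_1}*\bm 1)+z_{n(\alpha_1+1)}(\bm 1*\bm 1)=z_{n\alpha_1}z_n+z_nz_{n\alpha_1}+z_{n(\alpha_1+1)},$$
which is exactly \eqref{eq.07} for $r=1$; the identical computation with the sign $-$ on the last summand yields \eqref{eq.08}.

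For the inductive step I would write $w=z_{n\alpha_1}w'$ with $w'=z_{n\alpha_2}\cdots z_{n\alpha_r}$ and apply the recursion once, with $j=n\alpha_1$, $u=w'$, $k=n$, $v=\bm 1$, using $w*\bm 1=w$ and $w'*\bm 1=w'$:
$$w*z_n=z_{n\alpha_1}(w'*z_n)+z_nw+z_{n(\alpha_1+1)}w'.$$
Now I would invoke the induction hypothesis on the depth-$(r-1)$ word $w'$. Left-multiplying that expansion by $z_{n\alpha_1}$ produces all the insertion terms in which $z_n$ lands in positions $1,\ldots,r$, together with the contraction terms that merge $z_n$ into $z_{n\alpha_2},\ldots,z_{n\alpha_r}$; the term $z_nw$ supplies the remaining insertion at position $0$; and $z_{n(\alpha_1+1)}w'$ supplies the one missing contraction, the merge into $z_{n\alpha_1}$. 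Collecting these recovers exactly the right-hand side of \eqref{eq.07}.

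Finally, the star case is line-for-line identical once one tracks the sign $-$ that replaces $+$ on the contraction term in the recursion for $\star$. Since left-multiplication by $z_{n\alpha_1}$ preserves signs and the extra term enters as $-z_{n(\alpha_1+1)}w'$, the induction shows that every contraction term carries $-1$ while every insertion term carries $+1$, which is precisely \eqref{eq.08}. I do not expect a genuine obstacle here: the only thing requiring care is the indexing bookkeeping that matches the two families on the right-hand side against "insert $z_n$ into one of the $r+1$ gaps" and "absorb $z_n$ into one of the $r$ letters."
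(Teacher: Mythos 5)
Your proof is correct and follows essentially the same route as the paper's own argument: induction on the depth $r$, peeling off the leading letter $z_{n\alpha_1}$, applying the recursive definition of $*$ (resp.\ $\star$) once, and then substituting the induction hypothesis for $z_{n\alpha_2}\cdots z_{n\alpha_r}*z_n$. The only differences are cosmetic --- you spell out the base case $r=1$ and the sign-tracking for the $\star$ case, both of which the paper leaves implicit.
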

\begin{proof}
We use induction on $r$ to prove Eq.\,(\ref{eq.07}). 
From the definition of the product $*$ we have the following.
\begin{eqnarray*}
\lefteqn{z_{n\alpha_1}\cdots z_{n\alpha_r}*z_n}\\
&=&z_{n\alpha_1}\left({z_{n\alpha_2}\cdots z_{n\alpha_r}*z_n}\right)
+z_nz_{n\alpha_1}\cdots z_{n\alpha_r}+z_{n(\alpha_1+1)}z_{n\alpha_2}
\cdots z_{n\alpha_r}.
\end{eqnarray*}
The induction hypothesis gives
\begin{eqnarray*}
\lefteqn{z_{n\alpha_2}\cdots z_{n\alpha_r}*z_n}\\
&=&z_nz_{n\alpha_2}\cdots z_{n\alpha_r}+
z_{n\alpha_2}z_nz_{n\alpha_3}\cdots z_{n\alpha_r}+\cdots
+z_{n\alpha_2}\cdots z_{n\alpha_r}z_n \\
&&\qquad +z_{n(\alpha_2+1)}z_{n\alpha_3}\cdots z_{n\alpha_r}+
z_{n\alpha_2}z_{n(\alpha_3+1)}z_{n\alpha_4}\cdots z_{n\alpha_r}+
\cdots+z_{n\alpha_2}\cdots z_{n\alpha_{r-1}} z_{n(\alpha_r+1)}.
\end{eqnarray*}
Substitute this identity in the above formula, we have the desired result.
Similarly we can prove Eq.\,(\ref{eq.08}) by induction, thus we omit it. 
\end{proof}
\begin{proposition}
Let $n,k$ be positive integers with $n\geq 2$. Then
\begin{eqnarray}\label{eq.09} 
z_{*n}^{k} &=& \sum^k_{r=1}\sum_{|\bm\alpha|=k}
{k\choose\alpha_1,\alpha_2,\ldots,\alpha_r}z_{n\alpha_1}
z_{n\alpha_2}\cdots z_{n\alpha_r},\\ \label{eq.10} 
z_{\star n}^{k} &=& \sum^k_{r=1}\sum_{|\bm\alpha|=k}
{k\choose\alpha_1,\alpha_2,\ldots,\alpha_r}(-1)^{k-r}z_{n\alpha_1}
z_{n\alpha_2}\cdots z_{n\alpha_r},
\end{eqnarray}
where $\bm\alpha=(\alpha_1,\alpha_2,\ldots,\alpha_r)$ is a $r$-tuple of positive integers.
\end{proposition}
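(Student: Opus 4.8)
The plan is to prove both identities simultaneously by induction on $k$, using the recursions $z_{*n}^{k+1}=z_{*n}^{k}*z_n$ and $z_{\star n}^{k+1}=z_{\star n}^{k}\star z_n$ together with the Lemma. The base case $k=1$ is immediate: the right-hand side of \eqref{eq.09} has only the term $r=1$, $\bm\alpha=(1)$, with coefficient ${1\choose 1}=1$, giving $z_n=z_{*n}^{1}$, and likewise for \eqref{eq.10}.

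For the inductive step I would assume \eqref{eq.09} for $k$ and expand $z_{*n}^{k+1}=z_{*n}^{k}*z_n$ by linearity. Applying \eqref{eq.07} to each summand $z_{n\alpha_1}\cdots z_{n\alpha_r}*z_n$ produces two kinds of monomials: $r+1$ \emph{insertion} terms of depth $r+1$, obtained by placing a new letter $z_n=z_{n\cdot 1}$ in one of the $r+1$ gaps, and $r$ \emph{increment} terms of depth $r$, obtained by replacing some $z_{n\alpha_i}$ by $z_{n(\alpha_i+1)}$. I would then fix a target word $W=z_{n\beta_1}\cdots z_{n\beta_s}$ with $|\bm\beta|=k+1$ and compute its total coefficient by collecting every source that yields $W$, summing over the positions at which $W$ can be produced (so that distinct gaps referring to the same source word are counted separately, exactly as in the algebra).

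The heart of the argument is the resulting multinomial recursion. An insertion can produce $W$ only at a position $i$ with $\beta_i=1$, from the depth-$(s-1)$ source $(\beta_1,\dots,\widehat{\beta_i},\dots,\beta_s)$ of weight $k$; since $\beta_i!=1$ this contributes ${k\choose\beta_1,\dots,\widehat{\beta_i},\dots,\beta_s}=k!\,\beta_i/\prod_j\beta_j!$. An increment can produce $W$ only at a position $i$ with $\beta_i\geq 2$, from the source $(\beta_1,\dots,\beta_i-1,\dots,\beta_s)$, contributing ${k\choose\beta_1,\dots,\beta_i-1,\dots,\beta_s}=k!\,\beta_i/\prod_j\beta_j!$. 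The key observation is that both expressions equal $k!\,\beta_i/\prod_j\beta_j!$, so summing over $i=1,\dots,s$ gives $\bigl(k!\sum_i\beta_i\bigr)/\prod_j\beta_j!=(k+1)!/\prod_j\beta_j!={k+1\choose\bm\beta}$, precisely the coefficient demanded by \eqref{eq.09} at depth $k+1$.

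For \eqref{eq.10} the same bookkeeping applies, the only difference being the signs dictated by \eqref{eq.08}, where the increment terms enter with a minus sign while a depth-$t$ source carries $(-1)^{k-t}$ by the induction hypothesis. An insertion sends depth $s-1\mapsto s$ with a plus sign, contributing the sign $(-1)^{k-(s-1)}=(-1)^{k+1-s}$; an increment keeps depth $s$ but flips the sign, contributing $-(-1)^{k-s}=(-1)^{k+1-s}$. Thus every source of $W$ carries the common sign $(-1)^{k+1-s}$, which factors out of the multinomial sum above and yields $(-1)^{k+1-s}{k+1\choose\bm\beta}$, as required. The main obstacle is purely combinatorial: one must check that each target word is hit by exactly the listed sources with no overcounting, and that the two cases $\beta_i=1$ (insertion) and $\beta_i\geq 2$ (increment) assemble into the single recursion ${k+1\choose\bm\beta}=\sum_{i=1}^{s}{k\choose\beta_1,\dots,\beta_i-1,\dots,\beta_s}$ under the convention $0!=1$; once this is seen, both identities follow at once.
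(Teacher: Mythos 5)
Your proof is correct and takes essentially the same route as the paper: induction on $k$ using the Lemma, with the crux being the multinomial recursion $\sum_{i=1}^{s}{k\choose\beta_1,\ldots,\beta_i-1,\ldots,\beta_s}={k+1\choose\bm\beta}$, which the paper writes equivalently as ${k-1\choose\alpha_1,\ldots,\alpha_{r+1}}(\alpha_1+\cdots+\alpha_{r+1})={k\choose\alpha_1,\ldots,\alpha_{r+1}}$. The only cosmetic differences are that you extract the coefficient of a fixed target word while the paper regroups the double sum globally, and you detail Eq.\,(\ref{eq.09}) and sketch Eq.\,(\ref{eq.10}) whereas the paper does the reverse.
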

\begin{proof}
We use induction on $k$ to prove Eq.\,(\ref{eq.10}).  
\begin{eqnarray*}
z_{\star n}^{k} &=& z_{\star n}^{k-1}\star z_n\\
&=&  \sum^{k-1}_{r=1}\sum_{|\bm\alpha|=k-1}
{k-1\choose\alpha_1,\alpha_2,\ldots,\alpha_r}(-1)^{k-1-r}z_{n\alpha_1}
z_{n\alpha_2}\cdots z_{n\alpha_r}\star z_n.
\end{eqnarray*}
The above identity is followed by the induction hypothesis.
By Lemma 1, we have 
\begin{eqnarray*}
z^k_{\star n}
&=& \sum^{k-1}_{r=1}\sum_{|\bm\alpha|=k-1}
{k-1\choose\alpha_1,\alpha_2,\ldots,\alpha_r}(-1)^{k-1-r}
\left(z_nz_{n\alpha_1}\cdots z_{n\alpha_r}+\cdots
+z_{n\alpha_1}\cdots z_{n\alpha_r}z_n\right. \\
&&\qquad\qquad\qquad
\left. -\left(z_{n(\alpha_1+1)}z_{n\alpha_2}\cdots z_{n\alpha_r}+
\cdots+z_{n\alpha_1}\cdots z_{n\alpha_{r-1}} z_{n(\alpha_r+1)}\right)\right).
\end{eqnarray*}
The former summand has $r+1$ numbers of $z_i$ and the latter summand has $r$
numbers of $z_i$ 
in each summation. We rewrite the summation such that each summand has the same numbers of $z_i$.
\begin{eqnarray*}
z^k_{\star n}
&=&(-1)^{k-1}z_{nk}+\\
&&\sum^{k-2}_{r=1}\left(\sum_{|\bm\alpha|=k-1}{k-1\choose\alpha_1,\ldots,\alpha_r}(-1)^{k-1-r}
(z_nz_{n\alpha_1}\cdots z_{n\alpha_r}+\cdots+z_{n\alpha_1}\cdots z_{n\alpha_r}z_n)\right.\\
&&\left.+\sum_{|\bm\alpha|=k-1}
{k-1\choose \alpha_1,\ldots,\alpha_{r+1}}(-1)^{k-1-r}
(z_{n(\alpha_1+1)}\cdots z_{n\alpha_r}
+\cdots+z_{n\alpha_1}\cdots z_{n(\alpha_{r+1}+1)})\right)\\
&&+k!\,z_n^k.
\end{eqnarray*}
We simplify the summation in the above identity as follows.
\begin{eqnarray*}
\lefteqn{\sum^{k-2}_{r=1}(-1)^{k-1-r}
\left(\sum_{|\bm\alpha|=k}
\left(\sum^{r+1}_{i=1}\frac{(k-1)!}{\alpha_1!\cdots\alpha_{i-1}!
(\alpha_i-1)!\alpha_{i+1}!\cdots\alpha_{r+1}!}\right)
z_{n\alpha_1}\cdots
z_{n\alpha_{r+1}}\right)}\\
&=&\sum^{k-2}_{r=1}(-1)^{k-1-r}
\left(\sum_{|\bm\alpha|=k}
{k-1\choose\alpha_1,\ldots,\alpha_{r+1}}
(\alpha_1+\cdots+\alpha_{r+1})
z_{n\alpha_1}\cdots
z_{n\alpha_{r+1}}\right)\\
&=&\sum^{k-2}_{r=1}(-1)^{k-1-r}
\left(\sum_{|\bm\alpha|=k}
{k\choose\alpha_1,\ldots,\alpha_{r+1}}
z_{n\alpha_1}\cdots
z_{n\alpha_{r+1}}\right).
\end{eqnarray*}
Combine the frist term $(-1)^{k-1}z_{nk}$ and the last term
$k!z_n^k$, we get our conclusion.
Eq.\,(\ref{eq.09}) can be proved by induction similarly.
\end{proof}
We transform Eq.\,(\ref{eq.09}) and Eq.\,(\ref{eq.10}) into the identities
among MZVs and MZSVs. 
\begin{theorem}
For positive integers $n,k$ with $n\geq 2$, we have
\begin{eqnarray*}
\zeta(n)^k &=& \sum^k_{r=1}\sum_{|\bm\alpha|=k\atop{\alpha_i\geq 1}}
{k\choose\alpha_1,\ldots,\alpha_r}\zeta(n\alpha_1,\ldots,n\alpha_r) \\
&=&\sum^k_{r=1}\sum_{|\bm\alpha|=k\atop{\alpha_i\geq 1}}
{k\choose\alpha_1,\ldots,\alpha_r}(-1)^{k-r}\zeta^\star(n\alpha_1,\ldots,n\alpha_r).
\end{eqnarray*}
\end{theorem}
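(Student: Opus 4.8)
The plan is to apply the two algebra homomorphisms $\zeta$ and $\zeta^\star$ directly to the polynomial identities (\ref{eq.09}) and (\ref{eq.10}) furnished by the Proposition, and then to read off the claimed scalar identities term by term. Everything reduces to the homomorphism properties $\zeta(w_1*w_2)=\zeta(w_1)\zeta(w_2)$ and $\zeta^\star(w_1\star w_2)=\zeta^\star(w_1)\zeta^\star(w_2)$ recorded in Section~\ref{sec.2}, together with the depth-one evaluations of $z_n$.

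First I would evaluate the left-hand sides. Since $z_{*n}^{k}=\underbrace{z_n*\cdots*z_n}_{k}$ and $\zeta$ is an algebra homomorphism for the harmonic product $*$, repeated application gives $\zeta(z_{*n}^{k})=\zeta(z_n)^k$. The word $z_n=x^{n-1}y$ encodes the single index $(n)$, so $\zeta(z_n)=\zeta(n)$ and hence $\zeta(z_{*n}^{k})=\zeta(n)^k$. In the same way, because $\zeta^\star$ is a homomorphism for $\star$, I obtain $\zeta^\star(z_{\star n}^{k})=\zeta^\star(z_n)^k=\zeta^\star(n)^k$; at depth one a multiple zeta-star value agrees with the ordinary zeta value, $\zeta^\star(n)=\zeta(n)$, so this left-hand side equals $\zeta(n)^k$ as well.

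Next I would expand the right-hand sides by linearity. Applying $\zeta$ to (\ref{eq.09}) sends each monomial $z_{n\alpha_1}\cdots z_{n\alpha_r}$ to $\zeta(n\alpha_1,\ldots,n\alpha_r)$, which reproduces the first displayed sum; applying $\zeta^\star$ to (\ref{eq.10}) sends the same monomial to $\zeta^\star(n\alpha_1,\ldots,n\alpha_r)$ while preserving the sign $(-1)^{k-r}$ and the multinomial coefficient, reproducing the second sum. Matching the left- and right-hand sides then yields both equalities simultaneously.

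The only point requiring care, rather than a genuine obstacle, is verifying that every word occurring is admissible, so that $\zeta$ and $\zeta^\star$ are actually defined on it. Each factor $z_{n\alpha_i}=x^{n\alpha_i-1}y$ begins with $x$ and ends with $y$ because $n\alpha_i\geq n\geq 2$; consequently the products $z_{*n}^{k}$ and $z_{\star n}^{k}$, as well as all the monomials on the right, lie in ${\mathfrak H}^0_*$ and ${\mathfrak H}^0_\star$ respectively, and all the resulting MZVs and MZSVs converge. This is precisely where the hypothesis $n\geq 2$ is used.
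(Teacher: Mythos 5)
Your proposal is correct and follows exactly the paper's route: the paper obtains this theorem by applying the homomorphisms $\zeta$ and $\zeta^\star$ to Eq.\,(\ref{eq.09}) and Eq.\,(\ref{eq.10}), which is precisely what you do, with the added (welcome) care about admissibility and the depth-one evaluation $\zeta^\star(n)=\zeta(n)$.
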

%
%
\section{A Combinatorial Application}\label{sec.4}
Using Eq.\,(\ref{eq.09}) and counting the numbers of terms it happened, 
we obtain the following identity.
\begin{theorem}
Given a pair of positive integers $\ell,k$ with $1\leq \ell<k$, we have
$$
\sum^k_{r=1}r!{k\brace r}
=\sum^\ell_{p=1}\sum^{k-\ell}_{q=1}{\ell \brace p}{k-\ell\brace q}
p!q!D(p,q).
$$
\end{theorem}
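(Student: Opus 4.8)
The plan is to count in two ways the sum of all coefficients appearing when $z_{*n}^k$ is expanded by the harmonic product. Let $\epsilon$ be the linear functional sending every word to $1$, so that $\epsilon$ of any element of $\mathfrak H^1$ is the sum of its coefficients. By Equation~(\ref{eq.09}),
$$
\epsilon\bigl(z_{*n}^k\bigr)=\sum_{r=1}^k\sum_{|\bm\alpha|=k}{k\choose\alpha_1,\ldots,\alpha_r}.
$$
For each fixed $r$, the inner sum counts the surjections from a $k$-element set onto an $r$-element set, hence equals $r!{k\brace r}$; summing over $r$ shows $\epsilon(z_{*n}^k)=\sum_{r=1}^k r!{k\brace r}$, the left-hand side.

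Next I would evaluate $\epsilon(z_{*n}^k)$ through associativity, writing $z_{*n}^k=z_{*n}^\ell * z_{*n}^{k-\ell}$. Applying Equation~(\ref{eq.09}) with exponents $\ell$ and $k-\ell$ and grouping by depth shows that the depth-$p$ monomials of $z_{*n}^\ell$ carry total coefficient $p!{\ell\brace p}$, and the depth-$q$ monomials of $z_{*n}^{k-\ell}$ carry total coefficient $q!{k-\ell\brace q}$. Expanding the product bilinearly gives $\epsilon(z_{*n}^k)=\sum_{i,j}c_id_j\,\epsilon(u_i*v_j)$, where $c_iu_i$ and $d_jv_j$ range over the monomials of the two factors. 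The fact cited from \cite{Chen1} in the Introduction says that stuffling words of depths $p$ and $q$ produces $D(p,q)$ terms, so $\epsilon(u_i*v_j)=D(p_i,q_j)$ depends only on the depths $p_i,q_j$ of $u_i,v_j$. Collecting the double sum by depth yields
$$
\epsilon\bigl(z_{*n}^k\bigr)=\sum_{p=1}^{\ell}\sum_{q=1}^{k-\ell}\Bigl(p!{\ell\brace p}\Bigr)\Bigl(q!{k-\ell\brace q}\Bigr)D(p,q),
$$
the right-hand side. Equating the two values of $\epsilon(z_{*n}^k)$ gives the identity.

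The step requiring the most care is that $\epsilon$ is \emph{not} multiplicative for the harmonic product: since $\epsilon(u*v)=D(p,q)\neq 1=\epsilon(u)\epsilon(v)$ in general, one cannot factor the count directly, and the argument succeeds only because the refined, depth-tracked coefficient sums of the two factors are exactly $p!{\ell\brace p}$ and $q!{k-\ell\brace q}$. The supporting identity $\epsilon(u*v)=D(p,q)$ is also where the Delannoy numbers enter: each term of the stuffle corresponds to a lattice path from $(0,0)$ to $(p,q)$ using unit rightward, upward, and diagonal steps, the diagonals being precisely the index-merging moves $z_{j+k}$ of the product rule, and such paths number $D(p,q)$. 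Because $\epsilon$ is insensitive to whether like terms are collected, no trouble arises from repeated monomials in these expansions, so the only genuine work beyond bookkeeping is this lattice-path count, which is the content of the cited result.
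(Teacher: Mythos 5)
Your proof is correct and follows essentially the same route as the paper: both sides count the sum of coefficients in Eq.~(\ref{eq.09}), using the surjection identity $\sum_{|\bm\alpha|=k,\,\alpha_i\geq 1}{k\choose\bm\alpha}=r!{k\brace r}$ on one side and the factorization $z_{*n}^{k}=z_{*n}^{\ell}*z_{*n}^{k-\ell}$ together with the $D(p,q)$ term-count of the stuffle product (the fact cited from \cite{Chen1}) on the other. Your only departure is formalizing the paper's informal ``counting the numbers of terms'' as the linear functional $\epsilon$ and noting explicitly why its non-multiplicativity is harmless, which is a welcome clarification but not a different argument.
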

\begin{proof}
There are 
$$
\sum^k_{r=1}\sum_{|\bm\alpha|=k\atop\alpha_i\geq 1}{k\choose\alpha_1,\alpha_2,\ldots,\alpha_r}
$$
terms in the right-hand side of Eq.\,(\ref{eq.09}). 
Since $\sum_{|\bm\alpha|=k\atop\alpha_i\geq 0}{k\choose\alpha_1,\ldots,\alpha_r}=r^k$, we have
$$
\sum_{|\bm\alpha|=k\atop\alpha_i\geq 1}{k\choose\bm\alpha}=
\sum_{|\bm\alpha|=k\atop\alpha_i\geq 1}{k\choose\alpha_1,\alpha_2,\ldots,\alpha_r}
=\sum^r_{j=1}(-1)^{r-j}{r\choose j}j^k
$$
by the inclusion--exclusion principle. By \cite[Eq.\,(6.19)]{GKP}, 
$$
\sum^r_{j=1}(-1)^{r-j}{r\choose j}j^k=r!{k\brace r},
$$
we can write the above number as
\begin{equation}\label{eq.11}
\sum_{|\bm\alpha|=k\atop\alpha_i\geq 1}{k\choose\bm\alpha}=r!{k\brace r}.
\end{equation}
For $1\leq \ell< k$, 
\begin{eqnarray*}
\lefteqn{\sum^k_{r=1}\sum_{|\bm\alpha|=k}
{k\choose\bm\alpha}z_{n\alpha_1}
z_{n\alpha_2}\cdots z_{n\alpha_r}}\\
&=&z_{*n}^{k}=z_{*n}^\ell * z_{*n}^{k-\ell} \\
&=&\sum^\ell_{p=1}\sum_{|\bm\beta|=\ell}
{\ell\choose\bm\beta}z_{n\beta_1}
z_{n\beta_2}\cdots z_{n\beta_p}*
\sum^{k-\ell}_{q=1}\sum_{|\bm\lambda|=k-\ell}
{k-\ell\choose\bm\lambda}z_{n\lambda_1}
z_{n\lambda_2}\cdots z_{n\lambda_q}\\
&=&\sum^\ell_{p=1}\sum_{|\bm\beta|=\ell}
\sum^{k-\ell}_{q=1}\sum_{|\bm\lambda|=k-\ell}
{\ell\choose\bm\beta}
{k-\ell\choose\bm\lambda}z_{n\beta_1}
z_{n\beta_2}\cdots z_{n\beta_p}*
z_{n\lambda_1}z_{n\lambda_2}\cdots z_{n\lambda_q}.
\end{eqnarray*}
Since the stuffle product of two MVZs of depth $p$ and $q$ produces
$D(p,q)$ numbers of MZVs, we count the numbers of MVZs in the above identity
and then we have
$$
\sum^k_{r=1}\sum_{|\bm\alpha|=k}
{k\choose\bm\alpha}=\sum^\ell_{p=1}\sum_{|\bm\beta|=\ell}
\sum^{k-\ell}_{q=1}\sum_{|\bm\lambda|=k-\ell}
{\ell\choose\bm\beta}
{k-\ell\choose\bm\lambda}D(p,q).
$$
Combing Eq.\,(\ref{eq.11}) we conclude our result. 
\end{proof}

If we set $k$ being an even integer and let $\ell$ be half of $k$, then we have a
beautiful identity.
\begin{corollary}
Given a positive integer $k$, we have
$$
\sum^{2k}_{r=1}r!{2k\brace r}=\sum^k_{p=1}\sum^k_{q=1}{k\brace p}{k\brace q}
p!q!D(p,q).
$$
\end{corollary}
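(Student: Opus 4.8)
The plan is to obtain the Corollary as an immediate specialization of Theorem~2, so essentially no new work is required. To keep the bookkeeping transparent I would first restate Theorem~2 with its weight variable renamed, since the letter $k$ plays different roles in the theorem and in the Corollary. Thus: for positive integers $\ell, K$ with $1\leq\ell<K$,
$$
\sum^K_{r=1}r!{K\brace r}
=\sum^\ell_{p=1}\sum^{K-\ell}_{q=1}{\ell\brace p}{K-\ell\brace q}p!q!D(p,q).
$$

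Next I would instantiate this identity at the symmetric choice $K=2k$ and $\ell=k$, where $k$ is the positive integer appearing in the Corollary. For every such $k$ the hypothesis $1\leq\ell<K$ becomes $1\leq k<2k$, which holds. Under this substitution the left-hand side is $\sum^{2k}_{r=1}r!{2k\brace r}$, while on the right $K-\ell=2k-k=k$, so both Stirling factors acquire the upper index $k$ and both inner sums run up to $k$. This produces exactly
$$
\sum^{2k}_{r=1}r!{2k\brace r}
=\sum^k_{p=1}\sum^k_{q=1}{k\brace p}{k\brace q}p!q!D(p,q),
$$
which is the asserted identity.

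The only points demanding any attention are clerical: verifying that the constraint $1\leq\ell<K$ of Theorem~2 survives the symmetric choice $\ell=K/2$ (it does, strictly, for every $k\geq1$), and keeping the two occurrences of the letter $k$ distinct throughout the substitution. No additional analytic or combinatorial input enters, since the entire content already resides in Theorem~2; the Corollary is simply its balanced special case.
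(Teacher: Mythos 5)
Your proposal is correct and coincides with the paper's own derivation: the Corollary is obtained precisely by specializing Theorem~2 to weight $2k$ with $\ell=k$, which is what the paper does (its phrase ``set $k$ being an even integer and let $\ell$ be half of $k$''). Your renaming of the weight variable to $K$ to avoid clashing with the Corollary's $k$ is a harmless clarification, not a different argument.
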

%
%
\section{Further Remarks}\label{sec.5}
The multiple Hurwitz zeta function and the multiple Hurwitz zeta star function are defined by
\begin{eqnarray*}
\zeta(\alpha_1,\ldots,\alpha_r;x) &=&
\sum_{0\leq k_1<\cdots<k_r}(k_1+x)^{-\alpha_1}(k_2+x)^{-\alpha_2}\cdots
(k_r+x)^{-\alpha_r}\quad\mbox{and}\\
\zeta^\star(\alpha_1,\ldots,\alpha_r;x) &=& 
\sum_{0\leq k_1\leq\cdots\leq k_r}(k_1+x)^{-\alpha_1}(k_2+x)^{-\alpha_2}\cdots
(k_r+x)^{-\alpha_r}
\end{eqnarray*}
where $x$ is a positive real number, respectively.
Recently the author \cite{Chen2} gave explicit evaluations of $\zeta(\{m\}^n;1/2)$,  
$\zeta^{\star}(\{m\}^n;1/2)$, and also proved
\begin{eqnarray*}
\sum_{|\bm\alpha|=n}\zeta(m\alpha_1,\ldots,m\alpha_r;x)
&=&\sum_{p+q=n}(-1)^{p-k}{p\choose k}\zeta(\{m\}^p;x)\zeta^\star(\{m\}^q;x),\quad\mbox{and}\\
\sum_{|\bm\alpha|=n}\zeta^\star(m\alpha_1,\ldots,m\alpha_r;x)
&=&\sum_{p+q=n}(-1)^{p}{q\choose k}\zeta(\{m\}^p;x)\zeta^\star(\{m\}^q;x).
\end{eqnarray*}

We extend the original maps $\zeta$ and $\zeta^\star$ 
to rational linear maps 
$\zeta_x:{\mathfrak H}^0_*\rightarrow\mathbb R$ and
$\zeta_x^\star:{\mathfrak H}^0_\star\rightarrow\mathbb R$
by $\zeta_x(\bm 1)=\zeta_x^\star(\bm 1)=1$ and
\begin{eqnarray*}
\zeta_x(z_{s_1}z_{s_2}\cdots z_{s_k}) &=&
\zeta(s_1,s_2,\ldots,s_k;x),\\
\zeta_x^\star(z_{s_1}z_{s_2}\cdots z_{s_k}) &=&
\zeta_x^\star(s_1,s_2,\ldots,s_k;x).
\end{eqnarray*}
Then these maps are algebra homomorphisms:
$$
\zeta_x(w_1*w_2)=\zeta(w_1;x)\zeta(w_2;x),\qquad
\zeta_x^\star(w_1\star w_2)=\zeta^\star(w_1;x)\zeta^\star(w_2;x).
$$
Applying Eq.\,(\ref{eq.09}) and Eq.\,(\ref{eq.10}) to these two maps, we have
another sum formulas related
the multiple Hurwitz zeta function and the multiple Hurwitz zeta star function.
\begin{theorem}
For a positive real number $x$, and positive integers $n,k$ with $n\geq 2$, we have 
$$
\sum^k_{r=1}\sum_{|\bm\alpha|=k}
{k\choose\bm\alpha}\zeta(n\bm\alpha;x) 
=\zeta(n;x)^k =\sum^k_{r=1}\sum_{|\bm\alpha|=k}
{k\choose\bm\alpha}(-1)^{k-r}\zeta^\star(n\bm\alpha;x),
$$
where $\bm\alpha=(\alpha_1,\alpha_2,\ldots,\alpha_r)$ is a $r$-tuple of positive integers.
\end{theorem}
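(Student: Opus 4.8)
The plan is to run the proof of Theorem 3 again, but with the maps $\zeta$ and $\zeta^\star$ replaced by their Hurwitz extensions $\zeta_x$ and $\zeta_x^\star$. The key point already established is that the two assertions of the theorem are, at the level of $\mathfrak H^1$, nothing more than the single word identities Eq.\,(\ref{eq.09}) and Eq.\,(\ref{eq.10}); turning them into numerical relations requires only a map that converts $*$ into ordinary multiplication (for the left identity) and $\star$ into ordinary multiplication (for the right identity). Since $\zeta_x$ and $\zeta_x^\star$ are stated just above to be algebra homomorphisms, $\zeta_x(w_1*w_2)=\zeta(w_1;x)\zeta(w_2;x)$ and $\zeta_x^\star(w_1\star w_2)=\zeta^\star(w_1;x)\zeta^\star(w_2;x)$, the same mechanism applies without change.

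First I would apply $\zeta_x$ to Eq.\,(\ref{eq.09}). On the left, $z_{*n}^{k}$ is the $k$-fold harmonic product $z_n*\cdots*z_n$, so the homomorphism property gives $\zeta_x(z_{*n}^{k})=\zeta_x(z_n)^k=\zeta(n;x)^k$. On the right, $\zeta_x$ sends each word $z_{n\alpha_1}\cdots z_{n\alpha_r}$ to $\zeta(n\alpha_1,\ldots,n\alpha_r;x)=\zeta(n\bm\alpha;x)$, and the coefficients ${k\choose\bm\alpha}$ are carried along, producing the first claimed equality. Every multiple Hurwitz zeta value that appears has last argument $n\alpha_r\geq n\geq 2$, so all of them converge and the manipulation is legitimate.

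Next I would apply $\zeta_x^\star$ to Eq.\,(\ref{eq.10}) in exactly the same way. Here $z_{\star n}^{k}=z_n\star\cdots\star z_n$, so $\zeta_x^\star(z_{\star n}^{k})=\zeta^\star(n;x)^k$. At depth one the strict and non-strict orderings coincide, hence $\zeta^\star(n;x)=\zeta(n;x)$ and the left side is again $\zeta(n;x)^k$; this is what lets both displayed equalities share the common middle term. On the right, $\zeta_x^\star$ sends $z_{n\alpha_1}\cdots z_{n\alpha_r}$ to $\zeta^\star(n\bm\alpha;x)$ while the signs $(-1)^{k-r}$ pass through unchanged, giving the second equality.

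I expect no genuine obstacle: the one ingredient not derived in the body of the paper, namely that $\zeta_x$ and $\zeta_x^\star$ respect $*$ and $\star$, is precisely the homomorphism property recorded immediately before the statement, and it follows from the stuffle decompositions of the multiple Hurwitz (star) series in the same manner as for $x=1$. The only steps requiring a word of care are the identification $\zeta^\star(n;x)=\zeta(n;x)$ at depth one and the convergence check $n\alpha_r\geq 2$; both are immediate. The whole argument is thus a two-line application of the two homomorphisms to the identities Eq.\,(\ref{eq.09}) and Eq.\,(\ref{eq.10}).
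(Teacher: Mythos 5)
Your proposal is correct and matches the paper's own argument exactly: the paper likewise obtains the theorem by applying the extended homomorphisms $\zeta_x$ and $\zeta_x^\star$ to Eq.\,(\ref{eq.09}) and Eq.\,(\ref{eq.10}), respectively. Your added remarks on the depth-one identification $\zeta^\star(n;x)=\zeta(n;x)$ and on convergence only make explicit what the paper leaves implicit.
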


Recently, the special value $x=1/2$ of $\zeta(\bm\alpha;x)$ 
and $\zeta^\star(\bm\alpha;x)$ cause
many researchers for concern \cite{Hof2, SC1, Zhao}. 
Note that the multiple $t$ value and the multiple $t$ star value are usually defined by
\begin{eqnarray*}
t(\alpha_1,\ldots,\alpha_r)&=&\sum_{1\leq k_1<\cdots<k_r}
(2k_1-1)^{-\alpha_1}(2k_2-1)^{-\alpha_2}\cdots(2k_r-1)^{-\alpha_r},\quad\mbox{and}\\
t^\star(\alpha_1,\ldots,\alpha_r)&=&\sum_{1\leq k_1\leq\cdots\leq k_r}
(2k_1-1)^{-\alpha_1}(2k_2-1)^{-\alpha_2}\cdots(2k_r-1)^{-\alpha_r}.
\end{eqnarray*}
Hence 
$2^{|\bm\alpha|}t(\bm\alpha)=\zeta(\bm\alpha;1/2)$ and 
$2^{|\bm\alpha|}t^\star(\bm\alpha)=\zeta^\star(\bm\alpha;1/2)$.
We substitute $x=1/2$ in the above theorem, we have
$$
\sum^k_{r=1}\sum_{|\bm\alpha|=k}
{k\choose\bm\alpha}t(n\bm\alpha) 
=t(n)^k=\sum^k_{r=1}\sum_{|\bm\alpha|=k}
{k\choose\bm\alpha}(-1)^{k-r}t^\star(n\bm\alpha).
$$
%


%
%
%

\end{document}